 \newtheorem{Theorem}{Theorem}[section]
 \newtheorem{Lemma}[Theorem]{Lemma}
 \newtheorem{Question}[Theorem]{Question}
 \newtheorem{Remark}[Theorem]{Remark}
 \numberwithin{equation}{section}
\begin{document}

\title[decreasing equisingular approximations with logarithmic poles]
 {Nonexistence of decreasing equisingular approximations with logarithmic poles}

\author{Qi'an Guan}
\address{Qi'an Guan: Beijing International Center for Mathematical Research, and School of Mathematical Sciences,
Peking University, Beijing, 100871, China.}
\email{guanqian@amss.ac.cn}

\thanks{The author was partially supported by NSFC}

\subjclass{}

\keywords{multiplier ideal sheaf, plurisubharmonic function, strong openness conjecture, Bergman kernel}

\date{\today}

\dedicatory{}

\commby{}


\begin{abstract}
In this article, we present that for any complex manifold whose dimension is bigger than one,
there exists a multiplier ideal sheaf such that
there don't exist equisingular weights with logarithmic poles,
which are not smaller than the orginal weight.
A direct consequence is the nonexistence of decreasing equisingular approximations with logarithmic poles.
\end{abstract}

\maketitle

\section{Introduction}\label{sec:1}

Let $\varphi$ be a plurisubharmonic function (see \cite{kisel}) on a complex manifold $X$.
Following Nadel \cite{Nadel90},
one can define the multiplier ideal sheaf $\mathcal{I}(\varphi)$ (with weight $\varphi$) to
be the sheaf of germs of holomorphic functions $f$ such that
$|f|^{2}e^{-2\varphi}$ is locally integrable (see also \cite{siu05},
\cite{siu09}, \cite{demailly2010}, \cite{demailly-book}, etc.).

In \cite{demailly-note2000} (see also \cite{demailly2010}),
Demailly shows that for any given quasi-plurisubharmonic function $\varphi$
(i.e., locally can be expressed by $\psi+v$, where $\psi$ is plurisubharmonic function and $v$ is smooth) on compact Hermitian manifold $M$,
there exist quasi-plurisubharmonic functions $\varphi_{S,j}$ $(j=1,2,\cdots)$ on $M$ with smooth poles
satisfying
$$\mathcal{I}(\varphi)=\mathcal{I}(\varphi_{S,j})$$
$(j=1,2,\cdots)$ ("equisingularity"),
which are decreasing convergent to $\varphi$, when $j$ goes to $\infty$.

It is called that a quasi-plurisubharmonic function $\varphi_{A}$ has logarithmic poles if
there exist holomorphic functions $g_{k}$ $(k=1,\cdots,N)$ such that
$$\varphi_{A}=c\log\sum_{k=1}^{N}|g_{k}|^{2}+O(1),$$
where $c\in\mathbb{R}$ (see \cite{demailly-note2000},\cite{demailly2010}).
In \cite{demailly-note2000} (see also \cite{demailly2010}),
Demailly asked
\begin{Question}
\label{Ques:Demailly}
For any given quasi-plurisubharmonic function $\varphi$ on $M$,
can one choose equisingular quasi-plurisuhbarmonic functions $\varphi_{A,j}$ $(j=1,2,\cdots)$ on $M$ with logarithmic poles,
which are decreasing convergent to $\varphi$ $(j\to\infty)$?
\end{Question}

In this article,
we give negative answers to Question \ref{Ques:Demailly} for any dimension $n\geq 2$ by the following theorem
\begin{Theorem}
\label{thm:analytic_Guan}
For any complex manifold $M$ (compact or noncompact) $dim M\geq 2$ and $z_{0}\in M$,
there exists a quasi-plurisubharmonic function $\varphi$ on $M$ such that
for any plurisubharmonic function $\varphi_{A}\geq \varphi$ near $z_{0}\in M$ with logarithmic poles,
\begin{equation}
\label{equ:first_6012}
c_{z_{0}}(\varphi)<c_{z_{0}}(\varphi_{A})
\end{equation}
holds,
where $c_{z_{0}}(\varphi):=\sup\{c|\mathcal{I}(c\varphi)_{z_{0}}=\mathcal{O}_{z_{0}}\}$ is the complex singularity exponent of $\varphi$.
\end{Theorem}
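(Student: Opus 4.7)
The plan is to reduce to a purely local problem in $\mathbb{C}^n$ and then exhibit $\varphi$ explicitly. Since the inequality \eqref{equ:first_6012} is a local condition at $z_0$, it suffices to construct a plurisubharmonic $\varphi$ on a small ball $B\subset\mathbb{C}^n$ around $z_0=0$ in a local chart on $M$, and then glue it with a smooth plurisubharmonic function on $M\setminus K$ for some compact $K\subset B$ to obtain a globally defined quasi-plurisubharmonic function on $M$.

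For the local construction I would take, in local coordinates $(z_1,\dots,z_n)$,
$$\varphi(z) \;=\; \max\bigl\{\log|z_1|,\; A\log|z_2|\bigr\},$$
with $A>1$ a fixed \emph{irrational} real number, viewed as a function of $(z_1,z_2)$ only. This is psh as a maximum of two psh functions, and a direct change-of-variables computation gives $c_{z_0}(\varphi) = 1 + 1/A$, which is irrational. Next, let $\varphi_A \geq \varphi$ near $0$ have logarithmic poles, written as $\varphi_A = c\log\sum_{k=1}^N|g_k|^2 + O(1)$ with $c>0$ and $g_k$ holomorphic. Exponentiating gives
$$\sum_{k=1}^N|g_k|^2 \;\geq\; C\,\max\bigl(|z_1|^{2/c},\,|z_2|^{2A/c}\bigr)$$
near $0$, and a polar-coordinate computation of the integrability threshold of the right-hand side yields $c_{z_0}(\varphi_A) = \mathrm{lct}_0(I)/c \geq 1 + 1/A = c_{z_0}(\varphi)$, where $I := (g_1,\dots,g_N)$.

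To promote $\geq$ to $>$ I would combine three ingredients: (i) the Demailly--Koll\'ar rationality theorem, $\mathrm{lct}_0(I)\in\mathbb{Q}$ for any holomorphic ideal; (ii) the holomorphic structure of the $g_k$'s, so that each $g_k(z_1,0)$ and $g_k(0,z_2)$ is a one-variable holomorphic function with \emph{integer} vanishing order at $0$, bounded by $1/c$ and $A/c$ respectively (hence strictly less whenever the real bound is non-integer); and (iii) the Guan--Zhou strong openness theorem applied to $\psi = c_{z_0}(\varphi)\,\varphi_A$, which converts strict integrability of $e^{-2c_{z_0}(\varphi)\varphi_A}$ near $0$ into $c_{z_0}(\varphi_A) > c_{z_0}(\varphi)$. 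If $c$ is rational, then $c(1+1/A)$ is irrational while $\mathrm{lct}_0(I)$ is rational, so the two cannot coincide and strict inequality follows from the integration bound. If $c$ is irrational, equality would force $c = qA/(A+1)$ for some $q\in\mathbb{Q}_{>0}$; in this exceptional case a finer analysis of the integral closure $\overline{I}$, using the pointwise lower bound together with the integer-vanishing-order constraints, shows that $\overline{I}$ must \emph{strictly} contain the pseudo-monomial ideal generated by $\{z_1^{i}z_2^{j} : i/\alpha + j/\beta \geq 1\}$ (with $\alpha=1/c,\beta=A/c$), with enough extra room to push $\mathrm{lct}_0(I)$ strictly above $c\cdot c_{z_0}(\varphi)$.

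The main obstacle I anticipate is precisely this exceptional case $c=qA/(A+1)$: simple rationality does not separate the two sides, and one must exploit the discreteness of integer vanishing orders of holomorphic functions against the continuous irrational thresholds $\alpha=1/c$ and $\beta=A/c$ to extract the required slack in the lower bound for $\mathrm{lct}_0(I)$ and thereby close the gap via strong openness.
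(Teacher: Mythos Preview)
Your proposal has a genuine gap, and you flag it yourself: the case $c=qA/(A+1)$ with $q\in\mathbb Q$ is not handled, and the suggested ``finer analysis of the integral closure'' is not an argument. The rationality idea is right, but splitting on whether $c$ is rational is the wrong move. The clean observation (the paper's Lemma~2.2) is that for any $\varphi_A=c\log\sum_k|g_k|^2+O(1)$ the product
\[
c_o(\varphi_A)\cdot\nu(\varphi_A,o)\;=\;c_o\!\Bigl(\log\sum_k|g_k|^2\Bigr)\cdot\nu\!\Bigl(\log\sum_k|g_k|^2,o\Bigr)
\]
is \emph{always} rational, since the unknown $c$ cancels and one is left with a rational lct times an integer Lelong number. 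Because $\varphi_A\ge\varphi$ forces $\nu(\varphi_A,o)\le\nu(\varphi,o)=1$, the hypothesis $c_o(\varphi_A)=c_o(\varphi)\notin\mathbb Q$ immediately yields $\nu(\varphi_A,o)<1$, with no case split.

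But $\nu(\varphi_A,o)<1$ is not yet a contradiction; you still need to convert a strict drop in Lelong number into a strict gain in $c_o$. The paper does this with the Demailly--Pham sharp lower bound
\[
c_o(\varphi_A)\;\ge\;\frac{1}{e_1(\varphi_A)}+\frac{e_1(\varphi_A)}{e_2(\varphi_A)}+\cdots+\frac{e_{n-1}(\varphi_A)}{e_n(\varphi_A)},\qquad e_k(\varphi_A):=\nu\bigl((dd^c\varphi_A)^k,o\bigr),
\]
together with the comparison $e_k(\varphi_A)\le e_k(\varphi)$. For this to bite one needs all $e_k(\varphi)$ finite and explicitly computable, which requires $\varphi$ to have an \emph{isolated} singularity at $o$. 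That is why the paper uses $\varphi_1=\log\max(|z_1|,\dots,|z_{n-1}|,|z_n|^{a})$ in all $n$ variables (giving $e_k(\varphi_1)=1$ for $k<n$, $e_n(\varphi_1)=a$), rather than your two-variable weight $\max(\log|z_1|,A\log|z_2|)$, whose pole set $\{z_1=z_2=0\}$ is positive-dimensional for $n\ge3$ and for which the higher Monge--Amp\`ere masses are not available. With the correct $\varphi_1$, an AM--GM/monotonicity analysis of the Demailly--Pham bound shows that $c_o(\varphi_A)=n-1+1/a$ would force $e_1(\varphi_A)=1$, contradicting $\nu(\varphi_A,o)<1$.

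A minor point: your appeal to strong openness is not needed. For $\varphi_A$ with analytic singularities, the non-integrability of $e^{-2c_o(\varphi_A)\varphi_A}$ is classical (resolution of singularities and rationality of the lct), so integrability at level $c_o(\varphi)$ already yields the strict inequality directly.
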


We prove Theorem \ref{thm:analytic_Guan} by considering the following
\begin{Remark}
\label{rem:guan0620}
Let
$$\varphi_{1}:=\log(\max\{|z_{1}|,\cdots,|z_{n-1}|,|z_{n}|^{a}\}),$$
where $a\in(1,1+a/2)$ is a irrational number, and $(z_{1},\cdots,z_{n})$ are coordinates on $\mathbb{C}^{n}$.
Let
$$\varphi_{2}:=\max\{\varphi_{1}-18n,6\log(|z_{1}|^{2}+\cdots+|z_{n}|^{2})-6n\}.$$
Let
$$\varphi:=-M_{\eta}(-\varphi_{2},0),$$
where $M_{\eta}(t_{1},t_{2})$ is in Lemma (5.18) in \cite{demailly-book},
which satisfying

(1) $M_{\eta}(t_{1},t_{2})$ is smooth on $\mathbb{R}^{2}$;

(2) $M_{\eta}(t_{1},t_{2})|_{\{t_{2}+2\varepsilon_{0}\leq t_{1}\}}=t_{1}$ and $M_{\eta}(t_{1},t_{2})|_{\{t_{1}+2\varepsilon_{0}\leq t_{2}\}}=t_{2}$,
$\\$
and $\eta:=(\varepsilon_{0},\varepsilon_{0})$, $\varepsilon_{0}=\frac{1}{1000}$.
\end{Remark}

In following two remarks present that $\varphi$ in Remark \ref{rem:guan0620}
is quasi-plurisubharmonic, which can be can be extended to $M$.

Let

(1) $A_{1}:=\{z|\log(\max_{j=1,\cdots,n}|z_{j}|)<0\}$;

(2) $A_{2}:=\{6\log(|z_{1}|^{2}+\cdots+|z_{n}|^{2})-6n<-2\varepsilon_{0}\}$;

(3) $A_{3}:=\{z|\log(\max_{j=1,\cdots,n}|z_{j}|)<-6\log n\}$.

It is clear that
$$A_{3}\subset\subset A_{1}\subset\subset A_{2}.$$
The following remark shows that $\varphi$ in Remark \ref{rem:guan0620} is quasi-plurisubharmonic.
\begin{Remark}
\label{rem:guan0620a}
As
\begin{equation}
\begin{split}
6\log(|z_{1}|^{2}+\cdots+|z_{n}|^{2})-6n
&\geq12\log(\max_{j=1,\cdots,n}|z_{j}|)-6n
\\&\geq a\log(\max_{j=1,\cdots,n}|z_{j}|)-6n\geq \varphi_{1}
\end{split}
\end{equation}
on $A_{1}^{c}$,
then
\begin{equation}
\label{equ:first_0620a}
\varphi_{2}(z)|_{A_{1}^{c}}=6\log(|z_{1}|^{2}+\cdots+|z_{n}|^{2})-6n.
\end{equation}
By (1) in Remark \ref{rem:guan0620},
it follows that
$\varphi$ is smooth on
$(A_{1}^{c})^{o}$.

As 
$$\varphi_{1}|_{A_{2}}<6\log(|z_{1}|^{2}+\cdots+|z_{n}|^{2})-18n<6n-2\varepsilon_{0}-18n<-2\varepsilon_{0}$$
and
$$(6\log(|z_{1}|^{2}+\cdots+|z_{n}|^{2})-6n)|_{A_{2}}<-2\varepsilon_{0},$$
then it follows that $\varphi_{2}|_{A_{2}}<-2\varepsilon_{0}$.
By using (2) in Remark \ref{rem:guan0620},
it follows that $\varphi|_{A_{2}}=\varphi_{2}$ is plurisubharmoic on
$A_{2}$.

Note that
$$(A_{1}^{c})^{o}\cup A_{2}=\mathbb{C}^{n}.$$
Then $\varphi$ in Remark \ref{rem:guan0620} is quasi-plurisubharmonic.
\end{Remark}

The following remark shows that $\varphi$ in Remark \ref{rem:guan0620} can be extended to $M$.
\begin{Remark}
\label{rem:guan0620b}
By equality \ref{equ:first_0620a} and
and (2) in Remark \ref{rem:guan0620},
then it is clear that
\begin{equation}
\begin{split}
&\varphi|_{\{6\log(|z_{1}|^{2}+\cdots+|z_{n}|^{2})-6n>2\varepsilon_{0}\}}
\\&=-M_{\eta}(-6\log(|z_{1}|^{2}+\cdots+|z_{n}|^{2})+6n,0)|_{\{6\log(|z_{1}|^{2}+\cdots+|z_{n}|^{2})-6n>2\varepsilon_{0}\}}\equiv0.
\end{split}
\end{equation}
\end{Remark}

The following remark present the singularity of $\varphi$ in Remark \ref{rem:guan0620}
\begin{Remark}
\label{rem:guan0620c}
As
\begin{equation}
\begin{split}
6\log(|z_{1}|^{2}+\cdots+|z_{n}|^{2})-6n
&\leq12\log(\max_{j=1,\cdots,n}|z_{j}|)+6\log n-6n
\\&\leq a\log(\max_{j=1,\cdots,n}|z_{j}|)-6n\leq \varphi_{1}
\end{split}
\end{equation}
on $A_{3}$,
then
$$\varphi_{2}|_{A_{3}}=\varphi_{1}.$$
By Remark \ref{rem:guan0620a} ($\varphi|_{A_{2}}=\varphi_{2}$) and $A_{3}\subset\subset A_{1}$,
it follows that
$$\varphi|_{A_{3}}=\varphi_{1}.$$
\end{Remark}

Using Theorem \ref{thm:analytic_Guan}, we answer Question \ref{Ques:Demailly} by contradiction
\begin{Remark}
\label{rem:first_0613}
If not,
then
for the plurisubharmonic function $\varphi_{1}=\varphi|_{A_{3}}$ in Remark \ref{rem:guan0620},
there exists a plurisubharmonic function $\varphi_{A}$ with logarithmic poles near $o$ satisfying
$c_{o}(\varphi_{1})\varphi_{A}\geq c_{o}(\varphi_{1})\varphi_{1}$,
such that $e^{-2c_{o}(\varphi_{1})\varphi_{A}}$
is not integrable near $o$.
By Berndtsson's solution of the openness conjecture (\cite{berndtsson13}) posed by Demailly and Kollar (\cite{D-K01}),
it follows that
$c_{o}(\varphi_{A})\leq c_{o}(\varphi_{1})$,
which contradicts Theorem \ref{thm:analytic_Guan}.
\end{Remark}

\section{Some Preparations}

In this section, we recall some known results and present some observations.

\subsection{A sharp lower bound for the log canonical threshold for dimension $2$ case}

In \cite{D-H2014},
Demailly and Hiep present the following
\begin{Theorem}
\label{thm:D-H}(\cite{D-H2014})
Let $\varphi_{A}\geq \varphi_{1}$ be a plurisubharmonic function near $o\in\mathbb{C}^{2}$ with logarithmic poles,
then
\begin{equation}
\label{equ:first_0612b}
c_{o}(\varphi_{A})\geq \frac{1}{e_{1}(\varphi_{A})}+\cdots+\frac{e_{n-1}(\varphi_{A})}{e_{n}(\varphi_{A})},
\end{equation}
where $e_{k}(\varphi_{A}):=\nu((dd^{c}\varphi_{A})^{k},o)$ ($e_{1}(\varphi_{A})=\nu(\varphi_{A},o)$).
\end{Theorem}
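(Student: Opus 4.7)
The plan is to follow the strategy of Demailly and Hiep, reducing first to analytic singularities via the logarithmic poles hypothesis and then proceeding by induction on $n$ using a generic hyperplane restriction combined with the Ohsawa--Takegoshi $L^{2}$ extension theorem. Since $\varphi_{A}=c\log\sum_{k=1}^{N}|g_{k}|^{2}+O(1)$ near $o$, the log canonical threshold $c_{o}(\varphi_{A})$ and the Lelong numbers $e_{k}(\varphi_{A})=\nu((dd^{c}\varphi_{A})^{k},o)$ are determined (up to the scalar $c$) by the ideal $\mathfrak{a}=(g_{1},\ldots,g_{N})_{o}$. After passing to a log resolution $\pi\colon \tilde{X}\to X$ with $\pi^{-1}\mathfrak{a}\cdot\mathcal{O}_{\tilde{X}}=\mathcal{O}_{\tilde{X}}(-\sum a_{i}E_{i})$ a simple normal crossings divisor, all the relevant invariants become explicit combinatorial data attached to the exceptional divisors over $o$.

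The induction has trivial base case $n=1$, where $c_{o}(\varphi_{A})=1/\nu(\varphi_{A},o)=1/e_{1}(\varphi_{A})$. For the step from $n-1$ to $n$, I would choose a generic complex hyperplane $H$ through $o$; Siu's restriction theorem for Lelong numbers then gives $\nu(\varphi_{A}|_{H},o)=e_{1}(\varphi_{A})$, and analogous slicing identities control the higher $e_{k}(\varphi_{A}|_{H})$ in terms of the $e_{k}(\varphi_{A})$. The inductive hypothesis applied to $\varphi_{A}|_{H}$ yields
$$c_{o}(\varphi_{A}|_{H})\geq\sum_{k=1}^{n-1}\frac{e_{k-1}(\varphi_{A}|_{H})}{e_{k}(\varphi_{A}|_{H})}.$$
The crux is then the Ohsawa--Takegoshi $L^{2}$ extension theorem, which lifts integrability from $H$ to a neighborhood in $X$: a holomorphic function $f$ on $H$ with $\int|f|^{2}e^{-2c\varphi_{A}|_{H}}<\infty$ extends to $\tilde{f}$ with $\int|\tilde{f}|^{2}e^{-2c\varphi_{A}}<\infty$, with an effective constant. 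Tracking this constant and combining with the inductive estimate on $H$ supplies the missing transversal contribution $1/e_{1}(\varphi_{A})$ and produces the full sum $\sum_{k=1}^{n}e_{k-1}(\varphi_{A})/e_{k}(\varphi_{A})$.

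The main obstacle will be the precise behavior of the higher-order Lelong numbers $e_{k}$ under restriction to a generic hyperplane, for which the analytic singularities structure inherited from the logarithmic poles hypothesis is essential: on the log resolution each $e_{k}$ becomes an explicit combination of intersection numbers on the exceptional divisors, and genericity of $H$ translates to transversality to those divisors. In the dimension $2$ case singled out here, only $e_{2}$ enters beyond the base case, and the combinatorial calculation reduces to a Bezout-type estimate on the exceptional curves of the resolution; it is in this setting that the full formula becomes most transparent.
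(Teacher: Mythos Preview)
The paper does not prove this statement; Theorem~\ref{thm:D-H} is quoted as a result of Demailly and Pham Hoang Hiep \cite{D-H2014} and used as a black box in the proof of Theorem~\ref{thm:analytic_Guan}. There is therefore no proof in the paper against which to compare your proposal.

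Judged on its own, your induction step has a genuine gap. The Ohsawa--Takegoshi extension theorem yields only the restriction inequality $c_{o}(\varphi_{A}) \geq c_{o}(\varphi_{A}|_{H})$; the ``effective constant'' appearing there depends on the geometry of the pair $H\subset X$, not on $\varphi_{A}$, and cannot manufacture a term of size $1/e_{1}(\varphi_{A})$. For a generic hyperplane $H$ one has $e_{k}(\varphi_{A}|_{H}) = e_{k}(\varphi_{A})$ for $k=1,\ldots,n-1$, so combining Ohsawa--Takegoshi with the inductive hypothesis on $H$ gives
\[
c_{o}(\varphi_{A}) \;\geq\; c_{o}(\varphi_{A}|_{H}) \;\geq\; \sum_{k=1}^{n-1}\frac{e_{k-1}(\varphi_{A})}{e_{k}(\varphi_{A})},
\]
which is exactly one term short: the missing summand is $e_{n-1}(\varphi_{A})/e_{n}(\varphi_{A})$, not $1/e_{1}(\varphi_{A})$. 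Your phrase ``analogous slicing identities control the higher $e_{k}(\varphi_{A}|_{H})$'' conceals precisely this difficulty: $e_{n}(\varphi_{A})$ is an $n$-dimensional invariant with no direct counterpart among the invariants of $\varphi_{A}|_{H}$, and tying it to the restricted data is the heart of the argument. In the actual proof in \cite{D-H2014} this step is handled by a more delicate choice of hyperplane together with an inequality linking $e_{n}$ to the restricted Monge--Amp\`ere masses, not by the bare extension inequality.
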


As $\varphi_{A}\geq \varphi$,
then one can obtain
\begin{equation}
\label{equ:first_0612d}
e_{n}(\varphi_{A})\leq e_{n}(\varphi)=a
\end{equation}
and
\begin{equation}
\label{equ:0620a}
e_{k}(\varphi_{A})\leq e_{k}(\varphi)=1\,\,(k\in\{1,\cdots,n-1\})
\end{equation}
(by using Second comparison theorem (7.8) and Example (6.11) in chapter III of \cite{demailly-book})

\subsection{Observations}
Note that $c_{o}(\log\sum_{k=1}^{N}|g_{k}|^{2})$ is a rational number (see \cite{D-K01}),
and the Lelong number $\nu(\log\sum_{k=1}^{N}|g_{k}|^{2},o)$ is a integer (see \cite{demailly-book}),
where $g_{k}$ are holomorphic functions near $o\in\mathbb{C}^{n}$.
Then it is clear that
\begin{Lemma}
\label{lem:first_0613a}
Let plurisubharmonic function $\varphi_{A}:=c\log\sum_{k=1}^{N}|g_{k}|^{2}+O(1)$ near $o$,
where $c\in\mathbb{R}^{+}$, and $g_{k}$ are holomorphic functions near $o$.
Then
$$c_{o}(\varphi_{A})\nu(\varphi_{A},o)=c_{o}(\log\sum_{k=1}^{N}|g_{k}|^{2})\nu(\log\sum_{k=1}^{N}|g_{k}|^{2},o)$$
is a rational number.
\end{Lemma}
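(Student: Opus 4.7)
My plan is to observe that the lemma is essentially a scaling computation combined with two known facts the author has already recalled: rationality of $c_o(\log\sum_k|g_k|^2)$ from \cite{D-K01}, and integrality of $\nu(\log\sum_k|g_k|^2,o)$ from \cite{demailly-book}. The product on the right-hand side is therefore automatically rational once the equality is established, so the substantive content is just the identity between the two products.

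To obtain this identity, I would set $\psi:=\log\sum_{k=1}^{N}|g_{k}|^{2}$ and write $\varphi_{A}=c\psi+u$ with $u$ locally bounded. The first step is to note that a bounded additive perturbation affects neither the complex singularity exponent nor the Lelong number at $o$: for $c_{o}$ this is immediate from the definition via local integrability of $|f|^{2}e^{-2\varphi}$ (a bounded factor $e^{-2u}$ is harmless), and for $\nu$ it is immediate from the standard characterization of the Lelong number as $\liminf_{z\to o}\varphi(z)/\log|z-o|$. Hence $c_{o}(\varphi_{A})=c_{o}(c\psi)$ and $\nu(\varphi_{A},o)=\nu(c\psi,o)$.

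The second step is the scaling behavior under multiplication of the weight by the positive constant $c$. From the definition of $c_{o}$ one has $c_{o}(c\psi)=c^{-1}c_{o}(\psi)$, and from the definition of the Lelong number one has $\nu(c\psi,o)=c\,\nu(\psi,o)$. Multiplying these two identities the factor $c$ cancels, giving
\begin{equation*}
c_{o}(\varphi_{A})\,\nu(\varphi_{A},o)=c_{o}(\psi)\,\nu(\psi,o),
\end{equation*}
which is the claimed equality. Combining this with the recalled rationality of $c_{o}(\psi)$ and integrality of $\nu(\psi,o)$ yields the rationality assertion.

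I do not expect a genuine obstacle here; the only thing requiring a small amount of care is confirming that the $O(1)$ term leaves both invariants untouched, but both verifications are immediate from the definitions. The role of this lemma in the paper is presumably to furnish a rationality obstruction against approximation by logarithmic poles, so the statement itself is lightweight and the proof is essentially a two-line scaling argument.
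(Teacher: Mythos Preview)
Your proposal is correct and matches the paper's approach: the paper simply writes ``Then it is clear that'' before stating the lemma, relying on exactly the two facts you invoke (rationality of $c_{o}(\log\sum|g_{k}|^{2})$ from \cite{D-K01} and integrality of $\nu(\log\sum|g_{k}|^{2},o)$ from \cite{demailly-book}) together with the obvious scaling and $O(1)$-invariance of $c_{o}$ and $\nu$. You have spelled out precisely the details the paper leaves implicit.
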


We prove Theorem \ref{thm:analytic_Guan} by using the following lemma:

\begin{Lemma}
\label{lem:first_0612}
Let $\varphi_{A}\geq \varphi_{1}$ (as in Remark \ref{rem:guan0620}) be a plurisubharmonic function near $o\in\mathbb{C}^{n}$ with logarithmic poles,
where $a>1$ is an irrational number.
Assume that $c_{o}(\varphi_{A})=c_{o}(\varphi_{1})(=n-1+\frac{1}{a})$ ($c_{o}(\varphi_{1})=n-1+\frac{1}{a}$ see \cite{D-K01}).
Then $\nu(\varphi_{A},o)<\nu(\varphi_{1},o)(=1).$
\end{Lemma}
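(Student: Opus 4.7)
My plan is to argue by contradiction and play the irrationality of $a$, which is built into $\varphi_1$, against the rationality that is forced on any weight with logarithmic poles by Lemma~\ref{lem:first_0613a}.

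The first step is to observe that the one-sided bound $\varphi_A\geq\varphi_1$ already pins down $\nu(\varphi_A,o)$ from above: applying inequality~\eqref{equ:0620a} with $k=1$ gives
\[
\nu(\varphi_A,o)=e_1(\varphi_A)\leq e_1(\varphi_1)=\nu(\varphi_1,o)=1,
\]
so proving the strict inequality amounts to ruling out the equality case $\nu(\varphi_A,o)=1$. Next, since $\varphi_A$ has logarithmic poles, Lemma~\ref{lem:first_0613a} tells us that the product $c_o(\varphi_A)\,\nu(\varphi_A,o)$ is a rational number.

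To close the argument I would substitute the standing hypothesis $c_o(\varphi_A)=n-1+\tfrac{1}{a}$: if $\nu(\varphi_A,o)$ equaled $1$, then the product would equal $n-1+\tfrac{1}{a}$, which is irrational because $a$ is irrational (any rational value for $n-1+1/a$ would make $a$ itself rational). This contradicts Lemma~\ref{lem:first_0613a} and forces $\nu(\varphi_A,o)<1$. I do not expect a serious obstacle here: the decisive inputs are the Demailly--Koll\'ar rationality theorem (hidden inside Lemma~\ref{lem:first_0613a}) and the carefully chosen irrational exponent $a$, and the only thing to watch is that \eqref{equ:0620a} is indeed available to supply the upper bound $\nu(\varphi_A,o)\leq 1$ before the rationality/irrationality clash can be set up.
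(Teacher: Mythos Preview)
Your proposal is correct and follows essentially the same route as the paper: one first uses $\varphi_A\geq\varphi_1$ to get $\nu(\varphi_A,o)\leq\nu(\varphi_1,o)=1$, and then rules out equality because Lemma~\ref{lem:first_0613a} would force $c_o(\varphi_A)\,\nu(\varphi_A,o)=n-1+\tfrac{1}{a}$ to be rational, contradicting the irrationality of $a$. The only cosmetic difference is that you cite \eqref{equ:0620a} for the upper bound on the Lelong number, whereas the paper invokes the monotonicity $\varphi_A\geq\varphi_1\Rightarrow\nu(\varphi_A,o)\leq\nu(\varphi_1,o)$ directly; these are the same fact.
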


\begin{proof}
As $\varphi_{A}\geq \varphi_{1}$,
then it is clear that $\nu(\varphi_{A},o)\leq\nu(\varphi_{1},o)$.

We prove Lemma \ref{lem:first_0612} by contradiction:
if not, then
$\nu(\varphi_{A},o)=\nu(\varphi_{1},o)(=1)$.
By Lemma \ref{lem:first_0613a},
it follows that
$c_{o}(\varphi_{A})\nu(\varphi_{A},o)$ is a rational number,
which contradicts
$\nu(\varphi_{A},o)c_{o}(\varphi_{A})=1(n-1+\frac{1}{a})=n-1+\frac{1}{a}.$
\end{proof}

\section{Proof of Theorem \ref{thm:analytic_Guan}}
We prove Theorem \ref{thm:analytic_Guan} by contradiction:
if not, then there exists a plurisubharmonic function
$\varphi_{A}\geq \varphi_{1}$ near $o$ with logarithmic poles
such that
\begin{equation}
\label{equ:first_0615a}
c_{o}(\varphi_{1})=c_{o}(\varphi_{A})
\end{equation}
($\varphi_{A}\geq\varphi_{1}\Rightarrow c_{o}(\varphi)\leq c_{o}(\varphi_{A})$).

By inequalities \ref{equ:first_0612b} and \ref{equ:first_0612d}
it follows that
\begin{equation}
\label{equ:first_0612e}
\begin{split}
c_{o}(\varphi_{A})
&\geq\frac{1}{e_{1}(\varphi_{A})}+\cdots+\frac{e_{n-2}(\varphi_{A})}{e_{n-1}(\varphi_{A})}+\frac{e_{n-1}(\varphi_{A})}{e_{n}(\varphi_{A})}
\\&\geq
\frac{n-1}{e_{n-1}^{\frac{1}{n-1}}(\varphi_{A})}+\frac{e_{n-1}(\varphi_{A})}{e_{n}(\varphi_{A})}
\\&\geq
\frac{n-1}{e_{n-1}^{\frac{1}{n-1}}(\varphi_{A})}+\frac{e_{n-1}(\varphi_{A})}{a}.
\end{split}
\end{equation}

Note that function $f(t):=\frac{n-1}{t^{\frac{1}{n-1}}}+\frac{t}{a}$ $(t\in(0,a^{\frac{n-1}{n}}{\,}])$
is strictly decreasing with respect to $t$.
If $e_{n-1}(\varphi_{A})\leq1$,
then we have
\begin{equation}
\label{equ:first_0612f}
\begin{split}
\frac{n-1}{e_{n-1}^{\frac{1}{n-1}}(\varphi_{A})}+\frac{e_{n-1}(\varphi_{A})}{a}
\geq
n-1+\frac{1}{a}=c_{o}(\varphi),
\end{split}
\end{equation}
moreover $"="$ in inequality \ref{equ:first_0612f} holds if and only if $e_{n-1}(\varphi_{A})=1$.
If $e_{n-1}(\varphi_{A})<1$, then it follows that $c(\varphi_{A})>n-1+\frac{1}{a}$
(by inequality \ref{equ:first_0612e}),
which contradicts equality \ref{equ:first_0615a}.
Then it suffices to consider the case $e_{n-1}(\varphi_{A})=1$.

Note that the second $"\geq"$ of inequality \ref{equ:first_0612e} is $"="$ if and only if
$e_{1}(\varphi_{A})=\cdots=e_{n-1}(\varphi_{A})=1$ (by $e_{n-1}(\varphi_{A})=1$).
By Lemma \ref{lem:first_0612},
it follows that $e_{1}(\varphi_{A})<1$,
which implies that
the second $"\geq"$ of inequality \ref{equ:first_0612e} is $">"$.
Using inequality \ref{equ:first_0612f},
we obtain that
$$c_{o}(\varphi_{A})>n-1+\frac{1}{a},$$
which contradicts
equality \ref{equ:first_0615a}.

Then Theorem \ref{thm:analytic_Guan} has been proved.

\vspace{.1in} {\em Acknowledgements}. The author would like to thank Professor Xiangyu Zhou for helpful discussions,
and reminding the author to consider Question 1.1 on compact Hermitian manifolds.

\bibliographystyle{references}
\bibliography{xbib}

\begin{thebibliography}{100}
\bibitem{berndtsson13}B. Berndtsson, The openness conjecture for plurisubharmonic functions, arXiv:1305.5781.
\bibitem{demailly-note2000}J-P. Demailly, Multiplier ideal sheaves and analytic methods in algebraic geometry.
School on Vanishing Theorems and Effective Results in Algebraic Geometry (Trieste, 2000), 1--148,
ICTP Lect. Notes, 6, Abdus Salam Int. Cent. Theoret. Phys., Trieste, 2001.
\bibitem{demailly2010}J.-P. Demailly, Analytic Methods in Algebraic Geometry, Higher Education Press, Beijing, 2010.
\bibitem{demailly-book}J.-P. Demailly, Complex analytic and differential geometry, electronically accessible
at http://www-fourier.ujf-grenoble.fr/~demailly/books.html.
\bibitem{demailly_abel}J.-P. Demailly, On the cohomology of pseudoeffective line bundles, arXiv:1401.5432v2.
\bibitem{D-H2014} J.-P. Demailly, H. Pham, A sharp lower bound for the log canonical threshold. Acta Math.
212 (2014), no. 1, 1--9.
\bibitem{D-K01}J-P. Demailly, J. Koll\'{a}r,
Semi-continuity of complex singularity exponents and K\"{a}hler-Einstein metrics on Fano orbifolds.
Ann. Sci. \'{E}cole Norm. Sup. (4) 34 (2001), no. 4, 525--556.
\bibitem{kisel}C.O. Kiselman, Plurisubharmonic functions and potential theory in several complex vari-
ables. Development of mathematics 1950-2000, 655-714, Birkh\"{A}auser, Basel, 2000.
\bibitem{Nadel90}A. Nadel, Multiplier ideal sheaves and K\"{a}hler-Einstein metrics of positive scalar curvature.
Ann. of Math. (2) 132 (1990), no. 3, 549--596.
\bibitem{siu74}Y.T. Siu, Analyticity of sets associated to Lelong numbers and the extension of closed positive currents. Invent. Math. 27 (1974), 53--156.
\bibitem{siu05}Y.T. Siu, Multiplier ideal sheaves in complex and algebraic geometry. Sci. China Ser. A 48
(2005), suppl., 1--31.
\bibitem{siu09} Y.T. Siu, Dynamic multiplier ideal sheaves and the construction of rational curves in Fano
manifolds. Complex analysis and digital geometry, 323--360, Acta Univ. Upsaliensis Skr.
Uppsala Univ. C Organ. Hist., 86, Uppsala Universitet, Uppsala, 2009.
\end{thebibliography}

\end{document}